\newtheorem{theorem}{Theorem}
\newtheorem{corollary}[theorem]{Corollary}
\newtheorem{definition}[theorem]{Definition}
\newtheorem{example}[theorem]{Example}
\newtheorem{lemma}[theorem]{Lemma}
\newtheorem{proposition}[theorem]{Proposition}
\newtheorem{remark}[theorem]{Remark}
\begin{document}

\title{Integrations on rings}
\author{Iztok Bani\v c}
\maketitle

\begin{abstract}
In calculus, an indefinite integral of a function $f$ is a differentiable function $F$ whose derivative is equal to $f$. In present paper, we generalize this notion of the indefinite integral from the ring of real functions to any ring. The main goal of the paper is to focus on the properties of such generalized integrals that are inherited from the well-known basic properties of indefinite integrals of real functions.
\end{abstract}
\-
\\
\noindent
{\it Keywords:} Ring, Integration, Jordan integration, Derivation, Jordan derivation\\
\noindent
{\it 2010 MSC:} 16W25,17C50,16W10,16W99


\section{Introduction}
In calculus, an antiderivative, primitive function or indefinite integral of a function $f$ is a differentiable function $F$ whose derivative is equal to $f$, i.e., $F ' = f$. In present paper we generalize this notion of the indefinite integral from the ring of real functions to any ring
(by a ring we always mean an associative ring not necessarily with an identity element unless explicitly mentioned otherwise). We will use the concept of well-studied derivations on rings (see Definition \ref{moja}) to generalize this notion of an indefinite integral from calculus. Going through the literature, one may easily notice that the derivations on rings have been the subject of intensive research for decades, while the notion of integrations on rings has never been introduced. 

\begin{definition}\label{moja}
Let $R$ be a ring. An additive mapping 
 $d:R\to R$ is a derivation on $R$, if 
$$
d(x\cdot y)=d(x)\cdot y+x\cdot d(y)
$$
for all $x,y\in R$.
\end{definition}
This concept of derivations on rings has also been generalized (as well as restricted) in many ways, for instance, there are inner derivations, Jordan derivations, Lie derivations, $(\theta ,\phi )$-derivations, generalized derivations, left derivations, and more; for examples see \cite{benkovic,benkovic1,bresarvukman1,bresarvukman2,bresar0,bresar1,fosner,hvala,hvala1,liu,vukman2}. In present paper we also deal with two such functions that are related to a derivation, see Definition \ref{moja1}. 
\begin{definition}\label{moja1}
Let $R$ be a ring. An additive mapping 
\begin{enumerate}
\item $d:R\to R$ is an inner derivation on $R$, if there is $a\in R$ such that
$$
d(x)=[x,a]=x\cdot a-a\cdot x
$$
for all $x\in R$.
\item $\delta:R\to R$ is a Jordan derivation on $R$, if 
$$
\delta(x\circ y)=\delta(x)\circ y+x\circ \delta(y)
$$
for all $x,y\in R$, where $x\circ y=x\cdot y+y\cdot x$ denotes the Jordan product on $R$.
\end{enumerate}
\end{definition}
It is a well-known fact that every inner derivation is a derivation, but there are derivations that are not inner derivations.
Also, every derivation is a Jordan derivation. It is also a well-known fact that not every Jordan derivation is a derivation. A classical result of Herstein asserts that any Jordan derivation on $2$-torsion free prime ring (recall that a ring is $n$-torsion free, where $n>1$ is an integer, if $nx=0$ implies $x=0$ for any $x\in R$) is a derivation (see also \cite{bresarvukman}). Cusak has generalized the Herstein's theorem to $2$-torsion free semi prime rings \cite{cusak} (see also \cite{bresar}). 

Derivations and Jordan derivations on rings have become very popular since their introduction and have been studied by many authors and many papers appeared. Both have been studied intensively not only on rings but also on other algebraic structures; for examples see \cite{bresarvukman,bresar,cusak,herstein,posner,vukman}, where more references can be found. 


It is a well-known fact that the indefinite integral of a function always gives rise to a family of functions and therefore it can be interpreted as the transformation, which maps each function $f$ to the family of functions $\{g \ | \ g'=f\}$. Therefore, to define integrations on rings as such set-valued transformations, the following notation is needed. For a ring $R$, let $2^R$ denote the family of all subsets of $R$. Then, an integration (a Jordan integration) $i$ on the ring $R$ will be defined as such (set-valued) transformation $i:R\to 2^R$. For any $A,B\in 2^R\setminus \{\emptyset \}$, we also define $A+B$ and $A\cdot B$ in a natural way as $A+B=\{a+b \ | \ a\in A, b\in B\}$ and $A\cdot B=\{a\cdot b \ | \ a\in A, b\in B\}$. In a special case when $A=\{a\}$, we write $a+B$ or $a\cdot B$ instead of $A+B$ or $A\cdot B$, respectively.

We proceed as follows. In the next section we introduce the notion of integration on a ring and show some of its basic properties. In Section \ref{4} we define a Jordan integration on a ring and show that every integration is also a Jordan integration. We also show that not every Jordan integration is an integration.

\section{Integrations on rings}\label{2}
In this section we introduce the new concept of integration on a ring and show many properties of such integrations. Our definition is motivated by the definition of an indefinite integral (a primitive function) in mathematical analysis, calculus.
\begin{definition}\label{def1}
Let $R$ be a ring, $x\in R$ and $d:R\to R$ a derivation on $R$. Let 
$$
i_d(x)=\{y\in R \ | \ x=d(y)\}.
$$
We say that $y\in R$ is a $d$-primitive element of the element $x\in R$, if 
$$
y\in i_d(x).
$$
We call the function $i_d:R\to 2^R$, $x\mapsto i_d(x)$, the $d$-integration on $R$ and the set $i_d(x)$ the $d$-integral of $x$. 

The function $i:R\to 2^R$ is an integration on $R$, if there is a derivation $d$ on $R$, such that $i=i_d$. For each $x\in R$ we call $i(x)$ an integral of $x$. 
\end{definition}
\begin{example}
Let $R$ be a ring and $d:R\to R$ the trivial derivation ($d(x)=0$ for all $x\in R$). Then $i_d(x)=R$ if $x=0$ and $i_d(x)=\emptyset$ if $x\neq 0$.
\end{example}

Many results that follow are easily obtained from the definition of an integration and the fact that any derivation $d:R\rightarrow R$ is a homomorphism
of the additive structure on $R$. Since the proofs of the results are short, we give them anyway.

The following results follow directly from Definition \ref{def1}.
\begin{proposition}\label{bla0}
Let $R$ be a ring and $d:R\to R$ a derivation on $R$. Then the following holds true.
\begin{enumerate}
\item $0\in i_d(0)$.
\item For each $x\in R$, $x\in i_d(d(x))$.
\item If $i_d(x)\neq \emptyset$, then $d(i_d(x))=\{x\}$ for all $x\in R$. 
\end{enumerate}
\end{proposition}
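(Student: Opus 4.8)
The plan is simply to unwind Definition~\ref{def1} in each of the three cases; every assertion is an immediate consequence of the defining equality $i_d(x)=\{y\in R \ | \ d(y)=x\}$ together with the elementary fact that an additive map sends $0$ to $0$. First I would record the latter fact: from $d(0)=d(0+0)=d(0)+d(0)$ one gets $d(0)=0$.

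For item~(1), the equation $d(0)=0$ says precisely that $0$ belongs to $\{y\in R \ | \ d(y)=0\}=i_d(0)$. For item~(2), fix $x\in R$; by definition $i_d(d(x))=\{y\in R \ | \ d(y)=d(x)\}$, and the choice $y=x$ trivially satisfies $d(y)=d(x)$, so $x\in i_d(d(x))$.

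For item~(3), suppose $i_d(x)\neq\emptyset$ and choose some $y_0\in i_d(x)$, so that $d(y_0)=x$; this already shows $x\in d(i_d(x))$, so $d(i_d(x))\neq\emptyset$. In the other direction, every element $y$ of $i_d(x)$ satisfies $d(y)=x$ by the very definition of $i_d(x)$, hence $d(i_d(x))\subseteq\{x\}$. Combining the two inclusions yields $d(i_d(x))=\{x\}$, as claimed.

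I do not expect any genuine obstacle here; the only point worth flagging is that the nonemptiness hypothesis in item~(3) really is needed, since the image of the empty set under $d$ is empty and hence differs from $\{x\}$. Beyond that, the whole statement is a direct transcription of the definition, which is exactly why the authors note that these results ``follow directly from Definition~\ref{def1}''.
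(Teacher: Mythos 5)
Your proof is correct and follows essentially the same route as the paper's: unwind Definition~\ref{def1} in each case, using $d(0)=0$ for item (1) and the two obvious inclusions for item (3). The only difference is that you spell out why $d(0)=0$ from additivity, which the paper takes for granted.
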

\begin{proof}
Since $d(0)=0$, it follows that $0\in i_d(0)$, which proves (1). 
To prove (2), let $y=d(x)$. Then it follows from Definition \ref{def1} that $x\in i_d(y)=i_d(d(x))$.
Finally we prove (3). Let $z\in i_d(x)$. Then $d(z)=x$ and hence $d(i_d(x))=\{x\}$.
\end{proof}
\begin{proposition}\label{bla6}
Let $R$ be a ring and $d:R\to R$ a derivation on $R$. Then 
\begin{enumerate}
\item $d$ is surjective if and only if $i_d(x)\neq \emptyset$ for all $x\in R$.
\item $d$ is injective if and only if $|i_d(x)|=1$ for all $x\in R$.
\end{enumerate}
\end{proposition}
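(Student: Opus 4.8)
The plan is to prove both equivalences by unwinding Definition~\ref{def1}; there is essentially no computation here, only a translation between quantifiers and the set-theoretic description of the integral $i_d(x)=\{y\in R\mid x=d(y)\}$. For part~(1), I would note that ``$d$ is surjective'' means that for every $x\in R$ there exists $y\in R$ with $d(y)=x$, and that this existence statement is by definition exactly the assertion $i_d(x)\neq\emptyset$. So $d$ is surjective if and only if $i_d(x)\neq\emptyset$ for all $x\in R$, with nothing further required.

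For part~(2), I would treat the two implications separately. Assuming $|i_d(x)|=1$ for every $x$, take $a,b\in R$ with $d(a)=d(b)$; then both $a$ and $b$ lie in $i_d(d(a))$, which is a singleton, so $a=b$ and $d$ is injective. Conversely, if $d$ is injective, then any two elements of $i_d(x)$ share the image $x$ under $d$ and hence coincide, giving $|i_d(x)|\le 1$ for every $x\in R$.

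The one point I expect to need care is the \emph{equality} in the forward direction of~(2): injectivity of $d$ only delivers $|i_d(x)|\le 1$, and upgrading this to $|i_d(x)|=1$ requires in addition $i_d(x)\neq\emptyset$, i.e., surjectivity of $d$ by part~(1), which injectivity does not force. For instance, the abelian group $\bigoplus_{n\in\mathbb{N}}\mathbb{Z}$, equipped with the zero multiplication, is a ring on which every additive self-map is a derivation, and the shift $e_n\mapsto e_{n+1}$ is then an injective derivation whose image omits $e_0$, so $i_d(e_0)=\emptyset$. Hence in writing out the proof I would either read the claim in~(2) with ``$\le 1$'' in place of ``$=1$'', or record that ``$|i_d(x)|=1$ for all $x\in R$'' is precisely equivalent to $d$ being bijective; either way the substantive content is the at-most-one (singleton) reasoning above together with part~(1).
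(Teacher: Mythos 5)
Your argument for part (1) and for the backward direction of part (2) is exactly the paper's: both are pure unwindings of Definition~\ref{def1}. The point you raise about the forward direction of (2) is well taken, and in fact the paper's own proof commits precisely the slip you anticipate: from $y,z\in i_d(x)$ and injectivity it concludes that ``$i_d(x)$ consists only of a single element,'' when all that has been shown is $|i_d(x)|\le 1$; the possibility $i_d(x)=\emptyset$ is not excluded. Your counterexample is valid --- on $\bigoplus_{n\in\mathbb{N}}\mathbb{Z}$ with zero multiplication every additive self-map is a derivation, and the shift $e_n\mapsto e_{n+1}$ is injective with $e_0$ outside its image, so $|i_d(e_0)|=0$ --- hence statement (2) as written is false, and the correct formulation is either ``$d$ is injective if and only if $|i_d(x)|\le 1$ for all $x\in R$'' or ``$|i_d(x)|=1$ for all $x\in R$ if and only if $d$ is bijective.'' Your write-up is therefore not merely equivalent to the paper's proof but a correction of it; I would keep the explicit counterexample and the restated equivalence in the final version.
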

\begin{proof}
First we prove (1). Suppose that $d$ is surjective and let $x\in R$. Then there is $y\in R$ such that $x=d(y)$. It follows that $y\in i_d(x)$ and $i_d(x)$ is not empty. 
Suppose that $i_d(x)\neq \emptyset$ for all $x\in R$. Let $x\in R$ and $y\in i_d(x)$. It follows that $x=d(y)$.

Next we prove (2). Suppose that $d$ is injective, $x\in R$, and $y,z\in i_d(x)$. Then $d(y)=d(z)=x$. Since $d$ is injective, it follows that $y=z$ and therefore $i_d(x)$ consists only of a single element. Suppose that $|i_d(x)|=1$ for all $x\in R$. Let $x,y,z\in R$ be such elements that $x=d(y)=d(z)$. Then $y,z\in i_d(x)$ and therefore $y=z$ (since $|i_d(x)|=1$).
\end{proof}
The theorems that follow describe basic properties of integrations, introduced in Definition \ref{def1}. First we show some properties of the integral $i_d(0)$ (Theorem \ref{bla1}). Note that $i_d(0)=\textup{Ker}(d)=\{x\in R \ | \ d(x)=0\}$ for any ring $R$ and any derivation $d$ on $R$.
\begin{definition}
Let $R$ be a ring with unity $1\in R$. For an integer $n$, $\mathbf{n}$ will always denote the element $\mathbf{n}=n\cdot 1=\underbrace{1+1+1+\ldots +1}_{n}\in R$. 
\end{definition}
\begin{theorem}\label{bla1}
Let $R$ be a ring with unity $1\in R$ and $d:R\to R$ a derivation on $R$. 
\begin{enumerate}
\item Then $-\mathbf{n},\mathbf{n}\in i_d(0)$ for each positive integer $n$.
\item If $\mathbf{n}$ is invertible, then 
$$
\mathbf{n}^{-1}\cdot \mathbf{m},\mathbf{m}\cdot \mathbf{n}^{-1}\in i_d(0)
$$ 
for all integers $m,n$.
\end{enumerate}
\end{theorem}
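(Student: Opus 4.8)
The plan is to first establish that $d(\mathbf{1}) = 0$, since everything else will follow from the additivity of $d$ together with the Leibniz rule. To see this, compute $d(\mathbf{1}) = d(\mathbf{1}\cdot\mathbf{1}) = d(\mathbf{1})\cdot\mathbf{1} + \mathbf{1}\cdot d(\mathbf{1}) = d(\mathbf{1}) + d(\mathbf{1}) = 2\,d(\mathbf{1})$, which gives $d(\mathbf{1}) = 0$, hence $\mathbf{1}\in i_d(0)$.

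For part (1), I would argue by induction on $n$ using additivity: $d(\mathbf{n}) = d(\underbrace{\mathbf{1}+\cdots+\mathbf{1}}_{n}) = n\,d(\mathbf{1}) = 0$, so $\mathbf{n}\in i_d(0)$ for every positive integer $n$; and since $d$ is additive, $d(-\mathbf{n}) = -d(\mathbf{n}) = 0$, so $-\mathbf{n}\in i_d(0)$ as well. (One could phrase this more uniformly by noting that $d(\mathbf{n}) = 0$ for all integers $n$, covering both signs and $n = 0$ at once via Proposition \ref{bla0}(1).)

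For part (2), assume $\mathbf{n}$ is invertible. The key observation is that $d$ annihilates $\mathbf{n}^{-1}$: from $\mathbf{1} = \mathbf{n}\cdot\mathbf{n}^{-1}$ and the Leibniz rule, $0 = d(\mathbf{1}) = d(\mathbf{n})\cdot\mathbf{n}^{-1} + \mathbf{n}\cdot d(\mathbf{n}^{-1}) = \mathbf{n}\cdot d(\mathbf{n}^{-1})$ since $d(\mathbf{n}) = 0$ by part (1); multiplying on the left by $\mathbf{n}^{-1}$ yields $d(\mathbf{n}^{-1}) = 0$. Now apply the Leibniz rule to the products $\mathbf{n}^{-1}\cdot\mathbf{m}$ and $\mathbf{m}\cdot\mathbf{n}^{-1}$: since both factors lie in the kernel of $d$ (using part (1) for $\mathbf{m}$ and the previous line for $\mathbf{n}^{-1}$), we get $d(\mathbf{n}^{-1}\cdot\mathbf{m}) = d(\mathbf{n}^{-1})\cdot\mathbf{m} + \mathbf{n}^{-1}\cdot d(\mathbf{m}) = 0$ and likewise $d(\mathbf{m}\cdot\mathbf{n}^{-1}) = 0$, so both elements lie in $i_d(0)$.

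I do not expect a serious obstacle here; the only mild subtlety is making sure the torsion-free hypothesis is \emph{not} silently invoked — the argument for $d(\mathbf{1}) = 0$ uses only that $2\,d(\mathbf{1}) = d(\mathbf{1})$, i.e. cancels an additive copy, not a division by $2$, so it goes through in any unital ring. The step deriving $d(\mathbf{n}^{-1}) = 0$ must be careful to left-multiply by $\mathbf{n}^{-1}$ rather than assume commutativity; that is the one place where the order of operations matters, and it dictates why the statement separately lists $\mathbf{n}^{-1}\cdot\mathbf{m}$ and $\mathbf{m}\cdot\mathbf{n}^{-1}$.
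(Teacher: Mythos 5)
Your proposal is correct and follows essentially the same route as the paper: first $d(\mathbf{1})=0$ from $d(\mathbf{1})=2\,d(\mathbf{1})$, then $d(\mathbf{n})=0$ by additivity and $d(-\mathbf{n})=-d(\mathbf{n})=0$, and finally $d(\mathbf{n}^{-1})=0$ from $0=d(\mathbf{n}\cdot\mathbf{n}^{-1})=\mathbf{n}\cdot d(\mathbf{n}^{-1})$ followed by the Leibniz rule on the two products. The closing remarks about not needing torsion-freeness and about left-multiplying by $\mathbf{n}^{-1}$ are accurate (indeed $\mathbf{n}$ is central, so even that last caution is automatic).
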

\begin{proof}
First we prove (1). From $d(1)=d(1\cdot 1)=d(1)\cdot 1+1\cdot d(1)=2d(1)$ it follows that $d(1)=0$. Therefore $1\in i_d(0)$. Since 
$$
d(\mathbf{n})=d(\underbrace{1+1+1+\ldots +1}_{n})=\underbrace{d(1)+d(1)+d(1)+\ldots +d(1)}_{n}=0,
$$ 
it follows that $\mathbf{n}\in i_d(0)$. Also, from $d(-\mathbf{n})=-d(\mathbf{n})=0$ it follows that $-\mathbf{n}\in i_d(0)$.

To prove (2), we first observe that $d(\mathbf{n}^{-1})=0$. This follows from the fact that $0=d(1)=d(\mathbf{n}\cdot \mathbf{n}^{-1})=d(\mathbf{n})\cdot \mathbf{n}^{-1}+\mathbf{n}\cdot d(\mathbf{n}^{-1})=0+\mathbf{n}\cdot d(\mathbf{n}^{-1})=\mathbf{n}\cdot d(\mathbf{n}^{-1})$, and since $\mathbf{n}$ is invertible, $d(\mathbf{n}^{-1})=0$. Next, it follows from $d(\mathbf{n}^{-1}\cdot \mathbf{m})=d(\mathbf{n}^{-1})\cdot \mathbf{m}+\mathbf{n}^{-1}\cdot d(\mathbf{m})=0+0=0$, that $\mathbf{n}^{-1}\cdot \mathbf{m}\in i_d(0)$. A similar proof gives $\mathbf{m}\cdot \mathbf{n}^{-1}\in i_d(0)$. 
\end{proof}
\begin{corollary}\label{bla10}
Let $R$ be a ring with unity $1\in R$ and $d:R\to R$ a derivation on $R$. If $y\in i_d(x)$ and $\mathbf{n}$ is invertible, then 
$$
y+\mathbf{n}^{-1}\cdot \mathbf{m},y+\mathbf{m}\cdot \mathbf{n}^{-1}\in i_d(x)
$$ 
for all integers $m,n$ and all $x,y\in R$.
\end{corollary}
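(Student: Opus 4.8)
The plan is to reduce the statement to Theorem \ref{bla1}(2) together with the additivity of $d$. The key observation is that membership in a $d$-integral is governed entirely by the value of $d$: by Definition \ref{def1}, $y\in i_d(x)$ means precisely $d(y)=x$, so to show $y+\mathbf{n}^{-1}\cdot\mathbf{m}\in i_d(x)$ it suffices to verify $d(y+\mathbf{n}^{-1}\cdot\mathbf{m})=x$.

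First I would unwind the hypothesis $y\in i_d(x)$ to obtain $d(y)=x$. Next I would invoke Theorem \ref{bla1}(2): since $\mathbf{n}$ is invertible, $\mathbf{n}^{-1}\cdot\mathbf{m}\in i_d(0)$ and $\mathbf{m}\cdot\mathbf{n}^{-1}\in i_d(0)$, which by the remark preceding the definition of $\mathbf{n}$ (namely $i_d(0)=\mathrm{Ker}(d)$) means $d(\mathbf{n}^{-1}\cdot\mathbf{m})=0$ and $d(\mathbf{m}\cdot\mathbf{n}^{-1})=0$. Then, using that $d$ is an additive map, I would compute
$$
d(y+\mathbf{n}^{-1}\cdot\mathbf{m})=d(y)+d(\mathbf{n}^{-1}\cdot\mathbf{m})=x+0=x,
$$
so $y+\mathbf{n}^{-1}\cdot\mathbf{m}\in i_d(x)$, and identically $d(y+\mathbf{m}\cdot\mathbf{n}^{-1})=x$, giving $y+\mathbf{m}\cdot\mathbf{n}^{-1}\in i_d(x)$. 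Since $m,n$ and $x,y$ were arbitrary (subject to $\mathbf{n}$ invertible and $y\in i_d(x)$), this completes the argument.

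There is essentially no obstacle here: the corollary is a direct consequence of Theorem \ref{bla1}(2) and the fact that $i_d$ is, in a precise sense, a coset-type construction for the additive homomorphism $d$ — translating an element of $i_d(x)$ by any element of $\mathrm{Ker}(d)$ stays inside $i_d(x)$. The only point requiring a word of care is making explicit the identification $i_d(0)=\mathrm{Ker}(d)$ so that Theorem \ref{bla1}(2) can be fed directly into the additivity computation.
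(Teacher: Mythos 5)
Your proposal is correct and follows exactly the paper's own argument: invoke Theorem \ref{bla1}(2) to get $d(\mathbf{n}^{-1}\cdot\mathbf{m})=0$ (and symmetrically $d(\mathbf{m}\cdot\mathbf{n}^{-1})=0$), then use additivity of $d$ to compute $d(y+\mathbf{n}^{-1}\cdot\mathbf{m})=x+0=x$. The only difference is that you spell out the second case and the identification $i_d(0)=\mathrm{Ker}(d)$, which the paper leaves implicit.
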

\begin{proof}
The result follows from $d(y+\mathbf{n}^{-1}\cdot \mathbf{m})=d(y)+d(\mathbf{n}^{-1}\cdot \mathbf{m})=x+0=x$ by Theorem \ref{bla1}.
\end{proof}

 Next we generalize the result from calculus, saying that if $g$ is an indefinite integral of $f$, then also $g+c$ is an indefinite integral of $f$ for any constant function $c$. 

\begin{theorem}\label{jedro}
Let $R$ be a ring, $x,z\in R$, $d:R\to R$ a derivation on $R$ and  $y\in i_d(x)$. The following statements are equivalent.
\begin{enumerate}
\item $z\in i_d(x)$.
\item There is a unique element $w_0\in \textup{Ker}(d)$, such that $z=y+w_0$.
\end{enumerate}
\end{theorem}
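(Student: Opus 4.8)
The plan is to exploit the fact that a derivation is, in particular, a homomorphism of the underlying additive group $(R,+)$, so that the fibre $i_d(x)=d^{-1}(\{x\})$, once it is known to contain $y$, is precisely the coset $y+\textup{Ker}(d)$. Accordingly, I would prove the two implications separately and keep everything at the level of additive-group arithmetic.

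For $(1)\Rightarrow(2)$: assume $z\in i_d(x)$, so that $d(z)=x=d(y)$. Set $w_0:=z-y$. Additivity of $d$ (together with the fact that $d$ respects additive inverses, which follows from additivity exactly as used in the proof of Theorem \ref{bla1}) gives $d(w_0)=d(z)-d(y)=x-x=0$, hence $w_0\in\textup{Ker}(d)$, and by construction $z=y+w_0$. For uniqueness, suppose also $z=y+w_1$ with $w_1\in\textup{Ker}(d)$; then $y+w_0=y+w_1$ in $(R,+)$, and cancellation in the additive group yields $w_1=w_0$.

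For $(2)\Rightarrow(1)$: assume $z=y+w_0$ with $w_0\in\textup{Ker}(d)$. Then $d(z)=d(y)+d(w_0)=x+0=x$, so by Definition \ref{def1} we get $z\in i_d(x)$.

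I do not anticipate any genuine obstacle here: the statement is simply the coset description of a fibre of a group homomorphism, and the only points needing a word of care are that $d$ sends inverses to inverses and that the asserted ``uniqueness'' is nothing more than cancellation in $(R,+)$. One could alternatively package $(1)\Rightarrow(2)$ using the remark (made just before Theorem \ref{bla1}) that $i_d(0)=\textup{Ker}(d)$, or via Proposition \ref{bla0}, but the direct computation above is the shortest route and is the one I would write out.
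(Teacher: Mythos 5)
Your proof is correct and follows essentially the same route as the paper: setting $w_0=z-y$, using additivity of $d$ to check $w_0\in\textup{Ker}(d)$, obtaining uniqueness by cancellation in $(R,+)$, and verifying the converse by the computation $d(y+w_0)=d(y)+d(w_0)=x$. No issues.
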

\begin{proof}
Suppose first that $z\in i_d(x)$. Let $w_0=z-y$. Obviously, $z=y+w_0$ and $d(w_0)=d(z-y)=0$. If $z=y+w_0=y+w_1$ for $w_0,w_1\in \textup{Ker}(d)$, then $0=z-z=w_0-w_1$ and therefore $w_0=w_1$. 
For the converse, let $z=y+w_0$ for any $w_0\in \textup{Ker}(d)$. Then $d(z)=d(y+w_0)=d(y)+d(w_0)=x+0=x$ and therefore $z\in i_d(x)$.
\end{proof}
\begin{corollary}\label{jedro1}
Let $R$ be a ring, $x\in R$, $d:R\to R$ a derivation on $R$ and  $y\in i_d(x)$. Then 
$$
i_d(x)=\{y+z \ | \ z\in \textup{Ker}(d)\}.
$$
\end{corollary}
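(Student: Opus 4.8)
The plan is to derive Corollary \ref{jedro1} as a direct consequence of Theorem \ref{jedro}, treating it as essentially a reformulation of the equivalence proved there in terms of an explicit description of the set $i_d(x)$.

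First I would fix $y \in i_d(x)$ as in the hypothesis and argue by double inclusion. For the inclusion $i_d(x) \subseteq \{y+z \ | \ z \in \textup{Ker}(d)\}$, I would take an arbitrary $z' \in i_d(x)$ and apply Theorem \ref{jedro} (the implication $(1)\Rightarrow(2)$) to obtain an element $w_0 \in \textup{Ker}(d)$ with $z' = y + w_0$; this shows $z'$ lies in the right-hand set. For the reverse inclusion, I would take an arbitrary element of the form $y + z$ with $z \in \textup{Ker}(d)$ and apply the implication $(2)\Rightarrow(1)$ of Theorem \ref{jedro} (with $w_0 = z$) to conclude $y + z \in i_d(x)$.

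Since both inclusions follow immediately from the two directions of the equivalence in Theorem \ref{jedro}, there is essentially no obstacle here; the only point worth noting is that the proof presupposes $i_d(x)$ is nonempty, which is exactly guaranteed by the standing hypothesis $y \in i_d(x)$. One could also remark that the uniqueness clause of Theorem \ref{jedro} shows the representation $z' = y + w_0$ is unique for each $z'$, so the union on the right-hand side is in fact disjoint, i.e. $i_d(x)$ is a coset of $\textup{Ker}(d)$ in the additive group of $R$; but this is not needed for the statement as written.
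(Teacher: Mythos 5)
Your proof is correct and follows exactly the route of the paper, which simply cites Theorem \ref{jedro}: you unpack the two directions of that equivalence into the two inclusions. The extra remark about disjointness/coset structure is accurate but, as you note, not needed.
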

\begin{proof}
The result follows directly from Theorem \ref{jedro}. 
\end{proof}
\begin{example}\label{mc}
Let $D=\mathbb R\setminus \{\frac{(2k+1)\pi}{2} \ | \ k \textup{ is an integer}\}$ and let $R$ be the ring of all differentiable functions on $D$ with the standard addition and multiplication of functions, i.e.\ $(f+g)(x)=f(x)+g(x)$ and $(f\cdot g)(x)=f(x)g(x)$ for each $x\in D$. Let $d$ be the standard derivation on $D$, i.e.\ $d(f)=f'$ for each $f\in R$. Next, let $h\in R$ be defined by 
$$
h(x)=\frac{(2k+1)\pi}{2}, \textup{ if } x\in \left(\frac{(2k-1)\pi}{2} ,\frac{(2k+1)\pi}{2}\right).
$$ 
Then clearly $h\in \textup{Ker}(d)$. Finally, let $f,g\in R$ be defined by $f(x)=\frac{1}{\cos^2x}$ and $g(x)=\tan x$ for all $x\in D$. Since $g\in i_d(f)$, it follows from Theorem \ref{jedro}, that also $g+h\in i_d(f)$.  
\end{example}
Next we realize the well-known formula 
$$
\int c\cdot f(x)~\textup{d}x=c\cdot \int f(x)~\textup{d}x
$$
from calculus for arbitrary rings.
\begin{theorem}\label{mcmc}
Let $R$ be a ring, $d:R\to R$ a derivation on $R$ and $w\in \textup{Ker}(d)$. Then the following holds true.
\begin{enumerate}
\item $w\cdot i_d(x)\subseteq i_d(w\cdot x)$ and $i_d(x)\cdot w\subseteq i_d(x\cdot w)$.
\item If $i_d(x)\neq \emptyset$, then $i_d(w\cdot x),i_d(x\cdot w)\neq \emptyset$.
\item If $R$ is a ring with unity $1\in R$, $w$ an invertible element and $i_d(x)\neq \emptyset$, then $i_d(w\cdot x)=w\cdot i_d(x)$ and $i_d(x\cdot w)=i_d(x)\cdot w$.
\end{enumerate}  
\end{theorem}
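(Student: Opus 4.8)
The plan is to prove the three items in order; item (1) carries essentially all the weight, and (2) and (3) follow quickly from it. For (1), I would take an arbitrary $y\in i_d(x)$, so that $d(y)=x$, and use the Leibniz rule together with $w\in\textup{Ker}(d)$ to compute $d(w\cdot y)=d(w)\cdot y+w\cdot d(y)=0+w\cdot x=w\cdot x$; hence $w\cdot y\in i_d(w\cdot x)$, and since $y$ was arbitrary, $w\cdot i_d(x)\subseteq i_d(w\cdot x)$. The inclusion $i_d(x)\cdot w\subseteq i_d(x\cdot w)$ comes from the mirror computation $d(y\cdot w)=d(y)\cdot w+y\cdot d(w)=x\cdot w$.

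For (2), assuming $i_d(x)\neq\emptyset$, I would fix some $y\in i_d(x)$; then (1) gives $w\cdot y\in i_d(w\cdot x)$ and $y\cdot w\in i_d(x\cdot w)$, so both integrals are nonempty. (The same hypothesis is what makes the set $w\cdot i_d(x)$ in (3) meaningful, since the product of subsets was only defined for nonempty sets.) For (3), in view of (1) it suffices to establish the reverse inclusion $i_d(w\cdot x)\subseteq w\cdot i_d(x)$. The key preliminary observation is that $d(w^{-1})=0$: applying $d$ to $1=w\cdot w^{-1}$ yields $0=d(w)\cdot w^{-1}+w\cdot d(w^{-1})=w\cdot d(w^{-1})$, and invertibility of $w$ forces $d(w^{-1})=0$ — this is exactly the computation already used in the proof of Theorem \ref{bla1}(2), so $w^{-1}\in\textup{Ker}(d)$ as well. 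Now, given $z\in i_d(w\cdot x)$, I would set $y=w^{-1}\cdot z$, so that $z=w\cdot y$, and check $d(y)=d(w^{-1})\cdot z+w^{-1}\cdot d(z)=w^{-1}\cdot(w\cdot x)=x$; thus $y\in i_d(x)$ and $z\in w\cdot i_d(x)$. Combined with (1) this gives $i_d(w\cdot x)=w\cdot i_d(x)$, and $i_d(x\cdot w)=i_d(x)\cdot w$ follows symmetrically by taking $y=z\cdot w^{-1}$.

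I do not expect a genuine obstacle: everything reduces to bookkeeping with the Leibniz rule plus the fact that $\textup{Ker}(d)$ contains the inverses of its invertible elements. The only mild subtlety is that the subset operations in the statement are defined only for nonempty sets, so the hypothesis $i_d(x)\neq\emptyset$ in (2) and (3) is there to keep the identities well-posed rather than because the underlying calculation requires it.
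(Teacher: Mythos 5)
Your proof is correct and follows essentially the same route as the paper: the Leibniz rule with $d(w)=0$ gives (1) and (2), and the observation that $d(w^{-1})=0$ (obtained by applying $d$ to $1=w\cdot w^{-1}$) yields the reverse inclusion in (3) by multiplying by $w^{-1}$. No substantive differences.
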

\begin{proof}
If $i_d(x)=\emptyset$, then obviously (1) holds true. Suppose that $i_d(x)\neq\emptyset$. We only prove that $w\cdot i_d(x)\subseteq i_d(w\cdot x)$ ($i_d(x)\cdot w\subseteq i_d(x\cdot w)$ can be proved similarly). Let $z\in i_d(x)$. Then $d(w\cdot z)=d(w)\cdot z+w\cdot d(z)=0+w\cdot x=w\cdot x$ and therefore $w\cdot z\in i_d(w\cdot x)$. This proves (1) and (2). To prove (3), suppose that $w$ is invertible and $i_d(x)\neq \emptyset$. First observe, that from $0=d(1)=d(w\cdot w^{-1})=d(w)\cdot w^{-1}+w\cdot d(w^{-1})=w\cdot d(w^{-1})$ we get $d(w^{-1})=0$. It also follows from (2) that $i_d(w\cdot x)\neq \emptyset$. Take any $y\in i_d(w\cdot x)$ and let $z=w^{-1}\cdot y$. Then 
$$
d(z)=d(w^{-1}\cdot y)=d(w^{-1})\cdot y+w^{-1}\cdot d(y)=0+w^{-1}\cdot w\cdot x=x.
$$ 
This means that $z\in i_d(x)$ and therefore $y=w\cdot z\in w\cdot i_d(x)$. A similar proof gives $i_d(x\cdot w)=i_d(x)\cdot w$.
\end{proof}
\begin{example}
Applying Theorem \ref{mcmc} to Example \ref{mc}, one gets $i_d(h\cdot f)=h\cdot i_d(f)$. Presenting this with the standard integral notation, one gets:
$$
\int h(x)f(x)~\textup{d}x=h(x)\int f(x)~\textup{d}x.
$$ 
Note that $h$ is not a constant function (but it is constant on every connected component of the domain $D$).
\end{example}
\begin{remark}
Let $R$ be a ring with unity $1\in R$. If $w$ is not an invertible element, then $i_d(x)\neq \emptyset$ does not necessarily imply $i_d(w\cdot x)=w\cdot i_d(x)$ or $i_d(x\cdot w)=i_d(x)\cdot w$. 
This follows immediately from the fact that $\int w(x) ~\textup{d}x\neq w(x)\cdot \int ~\textup{d}x$, for $w(x)=0$ for each $x\in \mathbb R$.
\end{remark}
Next, for $y\in i(x)$, $y_1\in i(x_1)$, $y_2\in i(x_2)$, we describe in Theorem \ref{bla7} the integrals that contain $y_1+y_2$, $y_1\cdot y_2$, and $y^{-1}$, respectively.
\begin{theorem}\label{bla7}
Let $R$ be a ring and $d:R\to R$ a derivation on $R$. 
\begin{enumerate}
\item If $y_1\in i_d(x_1)$ and $y_2\in i_d(x_2)$, then $y_1+y_2\in i_d(x_1+x_2)$ for all $x_1,x_2,y_1,y_2\in R$.
\item If $y_1\in i_d(x_1)$ and $y_2\in i_d(x_2)$, then $y_1\cdot y_2\in i_d(x_1\cdot y_2+y_1\cdot x_2)$ for all $x_1,x_2,y_1,y_2\in R$.
\item If $R$ is a ring with unity $1\in R$ and $y\in i_d(x)$, then $y^{-1}\in i_d(-y^{-1}\cdot x\cdot y^{-1})$ for all $x\in R$ and all invertible $y\in R$. 
\end{enumerate}
\end{theorem}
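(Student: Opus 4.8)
The plan is to verify each of the three membership claims directly from the definition of $i_d$: for a proposed primitive element $w$ and a target $t$, one checks $w\in i_d(t)$ by computing $d(w)$ and confirming $d(w)=t$. Every part reduces to a one-line application of either the additivity of $d$ or its Leibniz property, together with the fact (established inside the proof of Theorem \ref{bla1}) that $d(\mathbf 1)=0$ when $R$ has a unity. Since the proofs are short, I would simply present all three computations in sequence.

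For (1), take $y_1\in i_d(x_1)$ and $y_2\in i_d(x_2)$, so that $d(y_1)=x_1$ and $d(y_2)=x_2$. Because $d$ is an additive map, $d(y_1+y_2)=d(y_1)+d(y_2)=x_1+x_2$, which is precisely the statement that $y_1+y_2\in i_d(x_1+x_2)$. For (2), under the same hypotheses I would apply the defining identity of a derivation to the product $y_1\cdot y_2$, obtaining $d(y_1\cdot y_2)=d(y_1)\cdot y_2+y_1\cdot d(y_2)=x_1\cdot y_2+y_1\cdot x_2$, so $y_1\cdot y_2\in i_d(x_1\cdot y_2+y_1\cdot x_2)$.

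For (3), assume $R$ has unity $1\in R$ and that $y\in R$ is invertible with $y\in i_d(x)$, i.e.\ $d(y)=x$. First recall $d(1)=0$. Applying the Leibniz rule to $1=y\cdot y^{-1}$ gives $0=d(1)=d(y)\cdot y^{-1}+y\cdot d(y^{-1})=x\cdot y^{-1}+y\cdot d(y^{-1})$, hence $y\cdot d(y^{-1})=-x\cdot y^{-1}$, and multiplying on the left by $y^{-1}$ yields $d(y^{-1})=-y^{-1}\cdot x\cdot y^{-1}$. Therefore $y^{-1}\in i_d(-y^{-1}\cdot x\cdot y^{-1})$, as claimed.

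The only place that calls for any care is (3): since $R$ need not be commutative one must track on which side each factor sits, and one must invoke $d(1)=0$ rather than take it for granted. No genuine obstacle is expected; all three identities are immediate from the derivation axioms. Running the same argument with $1=y^{-1}\cdot y$ in place of $1=y\cdot y^{-1}$ recovers the identical formula for $d(y^{-1})$, which serves as a convenient consistency check.
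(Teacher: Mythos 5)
Your proposal is correct and follows essentially the same route as the paper's proof: parts (1) and (2) are the same one-line applications of additivity and the Leibniz rule, and part (3) derives $d(y^{-1})=-y^{-1}\cdot x\cdot y^{-1}$ from $0=d(1)=d(y\cdot y^{-1})$ exactly as the paper does, with your version merely spelling out the left multiplication by $y^{-1}$ that the paper leaves implicit.
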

\begin{proof}
(1) follows from $d(y_1+y_2)=d(y_1)+d(y_2)=x_1+x_2$, (2) follows from $d(y_1\cdot y_2)=d(y_1)\cdot y_2+y_1\cdot d(y_2)=x_1\cdot y_2+y_1\cdot x_2$. Finally we prove (3). It follows from $0=d(1)=d(y\cdot y^{-1})=d(y)\cdot y^{-1}+y\cdot d(y^{-1})$ that $d(y^{-1})=-y^{-1}\cdot x\cdot y^{-1}$. Therefore $y^{-1}\in i_d(-y^{-1}\cdot x\cdot y^{-1})$. 
\end{proof}
\begin{corollary}\label{bla7c}
Let $R$ be a ring and $d:R\to R$ a derivation on $R$. If $y,z\in i_d(x)$, then 
\begin{enumerate}
\item $y+z\in i_d(2x)$ and
\item $y\cdot z\in i_d(x\cdot z+y\cdot x)$
\end{enumerate}
 for all $x,y,z\in R$. 
\end{corollary}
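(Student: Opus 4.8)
The plan is to derive this corollary as a direct specialization of Theorem \ref{bla7}, where both "input" integrals are taken to be the same integral $i_d(x)$ containing two (possibly distinct) primitive elements $y$ and $z$. In other words, I would apply the already-established parts (1) and (2) of Theorem \ref{bla7} with the substitution $x_1 = x_2 = x$, $y_1 = y$, $y_2 = z$.

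For part (1) of the corollary, since $y \in i_d(x)$ and $z \in i_d(x)$, Theorem \ref{bla7}(1) gives $y + z \in i_d(x + x)$, and it remains only to observe that $x + x = 2x$, so $y + z \in i_d(2x)$. For part (2), applying Theorem \ref{bla7}(2) with the same substitution yields $y \cdot z \in i_d(x_1 \cdot y_2 + y_1 \cdot x_2) = i_d(x \cdot z + y \cdot x)$ directly, with nothing further to check. One could alternatively inline the one-line computation $d(y+z) = d(y) + d(z) = x + x = 2x$ and $d(y \cdot z) = d(y)\cdot z + y \cdot d(z) = x \cdot z + y \cdot x$, using only additivity and the Leibniz rule for $d$ together with $d(y) = d(z) = x$.

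There is essentially no obstacle here: the corollary is a pure instantiation of the theorem it follows, and no new idea, torsion hypothesis, or structural assumption on $R$ is required; in particular note that $2x$ need not be cancellable, but this plays no role since we are only asserting membership in $i_d(2x)$, not uniqueness. I would therefore keep the proof to a single sentence citing Theorem \ref{bla7}.
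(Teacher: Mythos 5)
Your proof is correct and matches the paper's: Corollary \ref{bla7c} is proved there exactly by instantiating Theorem \ref{bla7}(1) and (2) with $x_1=x_2=x$, $y_1=y$, $y_2=z$. Your added remarks (the inline computation via additivity and the Leibniz rule, and the observation that no torsion hypothesis is needed) are accurate but not required.
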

\begin{proof}
The results follow directly from Theorem \ref{bla7}. 
\end{proof}
\begin{corollary}\label{bla13c}
If $R$ be a commutative ring with unity $1\in R$ and $d:R\to R$ a derivation on $R$. If $y\in i_d(x)$ then $y^{-1}\in i_d(-y^{-2}\cdot x)$ for all $x\in R$ and all invertible $y\in R$. 
\end{corollary}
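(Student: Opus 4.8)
The plan is to invoke Theorem \ref{bla7}(3) directly and then simplify the resulting expression using commutativity of $R$. Concretely, since $R$ is in particular a ring with unity and $y$ is invertible with $y\in i_d(x)$, Theorem \ref{bla7}(3) immediately gives $y^{-1}\in i_d(-y^{-1}\cdot x\cdot y^{-1})$.

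It then remains only to observe that, because $R$ is commutative, the element appearing inside the integral can be rewritten: $-y^{-1}\cdot x\cdot y^{-1}=-y^{-1}\cdot y^{-1}\cdot x=-y^{-2}\cdot x$. Substituting this into the membership statement yields $y^{-1}\in i_d(-y^{-2}\cdot x)$, which is exactly the claim. I expect no real obstacle here: the only point worth stating explicitly is that commutativity is what allows the factor $x$ to be moved past $y^{-1}$ so that the two copies of $y^{-1}$ combine into $y^{-2}$. One could alternatively reprove it from scratch by differentiating $y\cdot y^{-1}=1$ to get $d(y^{-1})=-y^{-2}\cdot d(y)=-y^{-2}\cdot x$, but appealing to Theorem \ref{bla7}(3) is the most economical route.
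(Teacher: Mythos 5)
Your proposal is correct and follows exactly the paper's own argument: apply Theorem \ref{bla7}(3) to obtain $y^{-1}\in i_d(-y^{-1}\cdot x\cdot y^{-1})$ and then use commutativity to rewrite $-y^{-1}\cdot x\cdot y^{-1}$ as $-y^{-2}\cdot x$. Nothing is missing.
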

\begin{proof}
By Theorem \ref{bla7}, $y^{-1}\in i_d(-y^{-1}\cdot x\cdot y^{-1})$. Since $R$ is commutative, it follows that $-y^{-1}\cdot x\cdot y^{-1}=-y^{-2}\cdot x$. Therefore $y^{-1}\in i_d(-y^{-2}\cdot x)$. 
\end{proof}
We use the following lemma to prove Theorem \ref{bla4}, where the integration by parts is introduced -- it generalizes the integration by parts of functions $\int u(x)v'(x)~\textup{d}x=u(x)v(x)-\int u'(x)v(x)~\textup{d}x$, or more compactly
$$
uv=\int u~\textup{d}v+\int v~\textup{d}u.
$$
\begin{lemma}\label{bla3}
Let $R$ be a ring and $d:R\to R$ a derivation on $R$. If $i_d(x),i_d(y)\neq \emptyset$, then $i_d(x+y)=i_d(x)+i_d(y)$ for all $x,y\in R$.
\end{lemma}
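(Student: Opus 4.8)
The plan is to prove the set equality $i_d(x+y)=i_d(x)+i_d(y)$ by establishing the two inclusions separately, each by an elementary computation using only the additivity of $d$.

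For the inclusion $i_d(x)+i_d(y)\subseteq i_d(x+y)$, I would take an arbitrary element of the left-hand side; by definition of the sum of subsets of $R$ it has the form $a+b$ with $a\in i_d(x)$ and $b\in i_d(y)$, so that $d(a)=x$ and $d(b)=y$. Then $d(a+b)=d(a)+d(b)=x+y$, hence $a+b\in i_d(x+y)$. This is essentially a restatement of Theorem \ref{bla7}(1); the hypothesis $i_d(x),i_d(y)\neq\emptyset$ enters here only to ensure that $i_d(x)+i_d(y)$ is itself nonempty, the inclusion being vacuous otherwise.

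For the reverse inclusion $i_d(x+y)\subseteq i_d(x)+i_d(y)$, I would first use the hypothesis to fix some $a_0\in i_d(x)$, so $d(a_0)=x$. Given any $z\in i_d(x+y)$, I set $b:=z-a_0$. Then $d(b)=d(z)-d(a_0)=(x+y)-x=y$, so $b\in i_d(y)$, and therefore $z=a_0+b\in i_d(x)+i_d(y)$. Combining the two inclusions yields the lemma.

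There is no genuine obstacle here; the only points requiring a moment's care are keeping track of the nonemptiness hypothesis and the convention that a sumset is empty as soon as one of its summands is. As an alternative, both inclusions can be obtained simultaneously from Corollary \ref{jedro1}: writing $i_d(x)=a_0+\textup{Ker}(d)$ and $i_d(y)=b_0+\textup{Ker}(d)$ and using that $\textup{Ker}(d)$ is an additive subgroup, so $\textup{Ker}(d)+\textup{Ker}(d)=\textup{Ker}(d)$, one gets $i_d(x)+i_d(y)=(a_0+b_0)+\textup{Ker}(d)$, and since $a_0+b_0\in i_d(x+y)$ by Theorem \ref{bla7}(1), Corollary \ref{jedro1} identifies this with $i_d(x+y)$.
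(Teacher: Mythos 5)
Your proof is correct and matches the paper's argument essentially verbatim: the forward inclusion by additivity of $d$ on a sum $a+b$, and the reverse inclusion by fixing $a_0\in i_d(x)$ and checking $z-a_0\in i_d(y)$. The coset-based alternative you sketch is also fine, but the main argument is the same as the paper's.
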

\begin{proof}
Let $z\in i_d(x)+i_d(y)$. Then there are $a,b\in R$, $a\in i_d(x)$, $b\in i_d(y)$, such that $z=a+b$. Then $d(z)=d(a+b)=d(a)+d(b)=x+y$ and therefore $z\in i_d(x+y)$ (this also proves that from $i_d(x),i_d(y)\neq \emptyset$ it follows that $i_d(x+y)\neq \emptyset$). 

Let $z\in i_d(x+y)$. Then $d(z)=x+y$. Since $i_d(x)\neq \emptyset$, let $a\in i_d(x)$. Then $d(z-a)=d(z)-d(a)=d(z)-x=y$ and therefore $z-a\in i_d(y)$. We have proved that $z=a+(z-a)\in i_d(x)+i_d(y)$.
\end{proof}
\begin{theorem}\label{bla4}(Integration by parts)
Let $R$ be a ring and $d:R\to R$ a derivation on $R$. If $i_d(d(x)\cdot y),i_d(x\cdot d(y))\neq \emptyset$, then $x\cdot y\in i_d(d(x)\cdot y)+i_d(x\cdot d(y))$ for all $x,y\in R$. 
\end{theorem}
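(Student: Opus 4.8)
The plan is to combine the Leibniz rule for $d$ with Lemma \ref{bla3}. The starting point is the defining property of a derivation: for the element $x\cdot y\in R$ we have $d(x\cdot y)=d(x)\cdot y+x\cdot d(y)$. By Proposition \ref{bla0}(2), applied to the element $x\cdot y$, this gives
$$
x\cdot y\in i_d\bigl(d(x\cdot y)\bigr)=i_d\bigl(d(x)\cdot y+x\cdot d(y)\bigr).
$$

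Next I would invoke Lemma \ref{bla3} with the two ring elements $a=d(x)\cdot y$ and $b=x\cdot d(y)$. The hypothesis of the present theorem is precisely that $i_d(a)=i_d(d(x)\cdot y)$ and $i_d(b)=i_d(x\cdot d(y))$ are nonempty, which is exactly what Lemma \ref{bla3} requires. The lemma then yields
$$
i_d\bigl(d(x)\cdot y+x\cdot d(y)\bigr)=i_d(a+b)=i_d(a)+i_d(b)=i_d(d(x)\cdot y)+i_d(x\cdot d(y)).
$$
Chaining this equality with the membership obtained in the first step gives $x\cdot y\in i_d(d(x)\cdot y)+i_d(x\cdot d(y))$, which is the assertion.

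There is no real obstacle here; the argument is essentially a two-line combination of Proposition \ref{bla0}(2) and Lemma \ref{bla3}, the only point requiring (trivial) care being that the nonemptiness hypotheses of the theorem match verbatim those of Lemma \ref{bla3} under the decomposition of $d(x\cdot y)$ supplied by the Leibniz rule. One may optionally note that this is the ring-theoretic counterpart of the calculus identity $uv=\int u~\textup{d}v+\int v~\textup{d}u$, with set membership in place of equality because the summands are the integrals of the two halves of the product rule rather than the integral of $d(x\cdot y)$ itself.
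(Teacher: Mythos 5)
Your proposal is correct and follows exactly the same route as the paper's proof: Proposition \ref{bla0}(2) gives $x\cdot y\in i_d(d(x\cdot y))=i_d(d(x)\cdot y+x\cdot d(y))$, and Lemma \ref{bla3} (applicable thanks to the nonemptiness hypotheses) splits this set into $i_d(d(x)\cdot y)+i_d(x\cdot d(y))$. Nothing is missing.
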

\begin{proof}
It follows from Proposition \ref{bla0} and the definition of a derivation that $x\cdot y\in i_d(d(x\cdot y))=i_d(d(x)\cdot y+x\cdot d(y))$ and therefore $x\cdot y\in i_d(d(x)\cdot y+x\cdot d(y))$. By Lemma \ref{bla3}, $i_d(d(x)\cdot y+x\cdot d(y))=i_d(d(x)\cdot y)+i_d(x\cdot d(y))$.
\end{proof}
As seen in Theorem \ref{bla4}, the integration by parts only works through if 
$$
i_d(d(x)\cdot y),i_d(x\cdot d(y))\neq \emptyset.
$$ 
On the other hand, the set $i_d(d(x)\cdot y+x\cdot d(y))$ is never empty (we can always find $x\cdot y$ in it). This gives rise to the following question. Is it possible that $i_d(d(x)\cdot y),i_d(x\cdot d(y))= \emptyset$? The following example demonstrates that the answer to the question is affirmative.
\begin{example}\label{matrike}
Let $R$ be the ring of real $2\times 2$-matrices $M_2(\mathbb R)$, $A=
\begin{bmatrix}
 1&0\\
 0&0
\end{bmatrix}
$
and let $d:R\to R$ be the inner derivation on $R$, defined by $d(X)=X\cdot A-A\cdot X$. Next, let 
$
X=\begin{bmatrix}
 0&1\\
 0&0
\end{bmatrix}
$
and 
$
Y=\begin{bmatrix}
 0&0\\
 1&0
\end{bmatrix}
$.
Then 
$
d(X)=-X
$, 
$
d(Y)=Y
$.
It follows from 
$$
d\left(\begin{bmatrix}
 a&b\\
 c&d
\end{bmatrix}\right)=\begin{bmatrix}
 0&-b\\
 c&0
\end{bmatrix},
$$ 
 $d(X)\cdot Y=-A$ and $X\cdot d(Y)=A$, that $i_d(d(X)\cdot Y)= \emptyset$ and $i_d(X\cdot d(Y))= \emptyset$.
\end{example}
We conclude the section with theorems that present results generalizing the formula 
$$
\int x^n~\textup{d}x=\frac{x^{n+1}}{n+1}+c
$$ 
from calculus. First we state and prove the following lemma.
\begin{lemma}\label{prenesi1}
Let $R$ be a ring with unity $1\in R$ and $d:R\to R$ a derivation on $R$. 
\begin{enumerate}
\item If $x\in i_d(\mathbf{n}\cdot y)$ and $\mathbf{n}$ is invertible, then $\mathbf{n}^{-1}\cdot x\in i_d(y)$ for all $x,y\in R$ and all integers $n$.
\item If $\mathbf{n}\cdot x\in i_d(y)$ and $\mathbf{n}$ is invertible, then $x\in i_d(\mathbf{n}^{-1}\cdot y)$ for all $x,y\in R$ and all integers $n$.
\end{enumerate}
\end{lemma}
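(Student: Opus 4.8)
The plan is to reduce both statements to the two basic facts that $d(\mathbf{n})=0$ and $d(\mathbf{n}^{-1})=0$, and then to unwind the definition of $i_d$ by a single application of the Leibniz rule. The first fact is exactly part (1) of Theorem \ref{bla1}. The second is contained in part (2) of Theorem \ref{bla1} (take $m=1$, so that $\mathbf{n}^{-1}=\mathbf{n}^{-1}\cdot\mathbf{1}\in i_d(0)=\textup{Ker}(d)$); it also drops out in one line by applying $d$ to $\mathbf{n}\cdot\mathbf{n}^{-1}=1$ and cancelling the invertible element $\mathbf{n}$, exactly as was done inside the proof of Theorem \ref{mcmc}(3).

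For part (1) I would assume $x\in i_d(\mathbf{n}\cdot y)$, i.e.\ $d(x)=\mathbf{n}\cdot y$, and compute, using $d(\mathbf{n}^{-1})=0$,
$$
d(\mathbf{n}^{-1}\cdot x)=d(\mathbf{n}^{-1})\cdot x+\mathbf{n}^{-1}\cdot d(x)=0+\mathbf{n}^{-1}\cdot(\mathbf{n}\cdot y)=y,
$$
whence $\mathbf{n}^{-1}\cdot x\in i_d(y)$. (One can equivalently read this off from Theorem \ref{mcmc}(1) applied with $w=\mathbf{n}^{-1}\in\textup{Ker}(d)$, since then $\mathbf{n}^{-1}\cdot i_d(\mathbf{n}\cdot y)\subseteq i_d(\mathbf{n}^{-1}\cdot\mathbf{n}\cdot y)=i_d(y)$.)

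For part (2) I would assume $\mathbf{n}\cdot x\in i_d(y)$, i.e.\ $d(\mathbf{n}\cdot x)=y$. Expanding by the Leibniz rule and using $d(\mathbf{n})=0$ gives $\mathbf{n}\cdot d(x)=y$; left-multiplying by $\mathbf{n}^{-1}$ then yields $d(x)=\mathbf{n}^{-1}\cdot y$, i.e.\ $x\in i_d(\mathbf{n}^{-1}\cdot y)$. (Alternatively, since $i_d(y)\neq\emptyset$, apply Theorem \ref{mcmc}(3) with $w=\mathbf{n}^{-1}$ to get $i_d(\mathbf{n}^{-1}\cdot y)=\mathbf{n}^{-1}\cdot i_d(y)\ni\mathbf{n}^{-1}\cdot\mathbf{n}\cdot x=x$.)

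There is essentially no obstacle here: the entire content is the remark that $\mathbf{n}^{-1}$, like $\mathbf{n}$, lies in $\textup{Ker}(d)$, after which the derivation axiom does everything. The only point worth a moment's care is the side on which one multiplies by $\mathbf{n}^{-1}$, but since $\mathbf{n}=\mathbf{1}+\cdots+\mathbf{1}$ is central in $R$ (and hence so is $\mathbf{n}^{-1}$ when it exists), the left/right distinction is immaterial.
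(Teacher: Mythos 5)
Your proposal is correct and follows essentially the same route as the paper: both parts reduce to $d(\mathbf{n})=d(\mathbf{n}^{-1})=0$ followed by one application of the Leibniz rule (the paper proves (2) by expanding $d(\mathbf{n}^{-1}\cdot\mathbf{n}\cdot x)$ rather than cancelling $\mathbf{n}$ afterwards, which is an immaterial variant). Your observation that $\mathbf{n}$ and $\mathbf{n}^{-1}$ are central, so the side of multiplication does not matter, is a correct and worthwhile point of care.
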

\begin{proof}
It follows from $d(\mathbf{n}^{-1}\cdot x)=d(\mathbf{n}^{-1})\cdot x+\mathbf{n}^{-1}\cdot d(x)=0+\mathbf{n}^{-1}\cdot d(x)=\mathbf{n}^{-1}\cdot d(x)=\mathbf{n}^{-1}\cdot \mathbf{n}\cdot y=y$ that $\mathbf{n}^{-1}\cdot x\in i_d(y)$. This proves (1). Next, from $d(x)=d(\mathbf{n}^{-1}\cdot \mathbf{n}\cdot x)=d(\mathbf{n}^{-1})\cdot (\mathbf{n}\cdot x)+\mathbf{n}^{-1}\cdot d(\mathbf{n}\cdot x)=0+\mathbf{n}^{-1}\cdot d(\mathbf{n}\cdot x)=\mathbf{n}^{-1}\cdot y$ it follows that $x\in i_d(\mathbf{n}^{-1}\cdot y)$. This proves (2) and we are done.
\end{proof}
\begin{theorem}\label{bla11}
Let $R$ be a commutative ring with unity $1\in R$ and $d:R\to R$ a derivation on $R$. Then 
$$
x^n\in i_d(\mathbf{n}\cdot x^{n-1}\cdot d(x))
$$
for all $x\in R$ and all positive integers $n$.
\end{theorem}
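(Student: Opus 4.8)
The plan is to reduce the statement to the \emph{power rule for derivations}, namely the identity
$$
d(x^n)=\mathbf{n}\cdot x^{n-1}\cdot d(x),
$$
and then invoke Proposition \ref{bla0}(2), which says $x^n\in i_d(d(x^n))$; substituting the identity into the right-hand side immediately gives $x^n\in i_d(\mathbf{n}\cdot x^{n-1}\cdot d(x))$, as desired. So the whole content of the proof is establishing that identity.

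I would prove the power rule by induction on the positive integer $n$. For the base case $n=1$ the identity reads $d(x)=\mathbf{1}\cdot x^{0}\cdot d(x)=d(x)$ (with the usual convention $x^{0}=1$), which is trivial. For the inductive step, assume $d(x^{n})=\mathbf{n}\cdot x^{n-1}\cdot d(x)$. Then, using that $d$ is a derivation,
$$
d(x^{n+1})=d(x^{n}\cdot x)=d(x^{n})\cdot x+x^{n}\cdot d(x)=\mathbf{n}\cdot x^{n-1}\cdot d(x)\cdot x+x^{n}\cdot d(x).
$$
Here is the one place where commutativity of $R$ is used: it lets me rewrite $x^{n-1}\cdot d(x)\cdot x$ as $x^{n}\cdot d(x)$, so the expression becomes $\mathbf{n}\cdot x^{n}\cdot d(x)+x^{n}\cdot d(x)=(\mathbf{n}+\mathbf{1})\cdot x^{n}\cdot d(x)=\mathbf{(n+1)}\cdot x^{n}\cdot d(x)$, completing the induction.

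There is no serious obstacle here; the argument is a routine induction. The only points that require a moment's care are the convention $x^{0}=1$ in the base case (so that the formula is meaningful for $n=1$ even when $\mathbf{n}$ is not assumed invertible), the explicit use of commutativity in the inductive step to slide the extra factor of $x$ past $d(x)$, and the bookkeeping identity $\mathbf{n}\cdot t+t=\mathbf{(n+1)}\cdot t$ in $R$, which follows from the additivity built into the notation $\mathbf{n}=n\cdot 1$. Once the power rule is in hand, the conclusion is one line via Proposition \ref{bla0}.
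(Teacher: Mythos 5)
Your proof is correct and follows essentially the same route as the paper: an induction on $n$ establishing $d(x^n)=\mathbf{n}\cdot x^{n-1}\cdot d(x)$ via the Leibniz rule and commutativity, then reading off membership in $i_d$ from Proposition \ref{bla0}. The only cosmetic difference is that the paper treats $n=2$ as a separate base case rather than relying on the convention $x^0=1$ at $n=1$, which your version handles explicitly and correctly since $R$ has unity.
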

\begin{proof}
We prove the theorem by induction on $n$.
\begin{itemize}
\item $n=1$. Obviously $x\in i_d(d(x))$ by Proposition \ref{bla0}.

\item $n=2$. We prove that $x^2\in i_d(x\cdot d(x))$. Since $d(x^2)=d(x)\cdot x+x\cdot d(x)=\mathbf{2}\cdot x\cdot d(x)$, it follows that $x^2\in i_d(\mathbf{2}\cdot x\cdot d(x))$.

\item Suppose that $x^n\in i_d(\mathbf{n}\cdot x^{n-1}\cdot d(x))$. We prove that $x^{n+1}\in i_d(\mathbf{n}+1)\cdot x^{n}\cdot d(x))$. Observe that $d(x^{n+1})=d(x\cdot x^n)=d(x)\cdot x^{n}+x\cdot d(x^{n})=d(x)\cdot x^{n}+x\cdot (\mathbf{n}\cdot x^{n-1}\cdot d(x))=x^n\cdot d(x)+\mathbf{n}\cdot x^n\cdot d(x)=(\mathbf{n}+1)\cdot x^n\cdot d(x)$. Therefore $x^{n+1}\in i_d((\mathbf{n}+1)\cdot x^{n}\cdot d(x))$.
\end{itemize}
\end{proof}
\begin{corollary}\label{bla11c}
Let $R$ be a commutative ring with unity $1\in R$ and $d:R\to R$ a derivation on $R$. If $\mathbf{n}$ is invertible, then
$$
\mathbf{n}^{-1}\cdot x^{n}\in i_d(x^{n-1}\cdot d(x))
$$
for all $x\in R$ and all positive integers $n$. 
\end{corollary}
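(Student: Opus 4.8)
The plan is to obtain the corollary as an immediate consequence of Theorem \ref{bla11} combined with Lemma \ref{prenesi1}. By Theorem \ref{bla11}, for every $x\in R$ and every positive integer $n$ we have $x^n\in i_d(\mathbf{n}\cdot x^{n-1}\cdot d(x))$. Since $\mathbf{n}=n\cdot 1$ is central in any ring with unity, the element $\mathbf{n}\cdot x^{n-1}\cdot d(x)$ may be read as $\mathbf{n}\cdot\bigl(x^{n-1}\cdot d(x)\bigr)$, so this membership says exactly that $x^n\in i_d(\mathbf{n}\cdot y)$ for $y=x^{n-1}\cdot d(x)$.

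Next I would apply Lemma \ref{prenesi1}(1) with this particular $y$: since $\mathbf{n}$ is invertible by hypothesis and $x^n\in i_d(\mathbf{n}\cdot y)$, the lemma yields $\mathbf{n}^{-1}\cdot x^n\in i_d(y)=i_d(x^{n-1}\cdot d(x))$, which is precisely the claimed statement. Nothing else is needed.

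There is essentially no obstacle in this step itself; all the genuine content sits upstream, in the induction proving Theorem \ref{bla11} and in the observation $d(\mathbf{n}^{-1})=0$ used inside Lemma \ref{prenesi1}. The only things worth a moment's care are the trivial bookkeeping about associativity and the centrality of $\mathbf{n}$ (so that ``$\mathbf{n}\cdot x^{n-1}\cdot d(x)$'' unambiguously factors as ``$\mathbf{n}$ times $x^{n-1}\cdot d(x)$''), and the fact that commutativity of $R$ is assumed only because Theorem \ref{bla11} requires it — the final deduction via Lemma \ref{prenesi1}(1) does not itself use commutativity.
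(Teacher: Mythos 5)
Your proof is correct and is exactly the paper's intended argument: apply Theorem \ref{bla11} to get $x^n\in i_d(\mathbf{n}\cdot x^{n-1}\cdot d(x))$ and then Lemma \ref{prenesi1}(1) with $y=x^{n-1}\cdot d(x)$ to divide by the invertible $\mathbf{n}$. The paper states this in one line; your added remarks on the centrality of $\mathbf{n}$ and on where commutativity is actually used are accurate but not needed.
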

\begin{proof}
The result follows directly from Theorem \ref{bla11} and Lemma \ref{prenesi1}.
\end{proof}
\begin{theorem}\label{bla12}
Let $R$ be a commutative ring with unity $1\in R$ and $d:R\to R$ a derivation on $R$. Then
$$
x^{-n}\in i_d(-\mathbf{n}\cdot x^{-n-1}\cdot d(x))
$$
for all invertible $x\in R$ and all positive integers $n$. 
\end{theorem}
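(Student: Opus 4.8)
The plan is to compute $d(x^{-n})$ explicitly and recognize it as $-\mathbf{n}\cdot x^{-n-1}\cdot d(x)$; once that equality is in hand, the membership $x^{-n}\in i_d(-\mathbf{n}\cdot x^{-n-1}\cdot d(x))$ is immediate from Definition \ref{def1}. The slickest route avoids starting a fresh induction by reusing Theorem \ref{bla11}, which already controls positive powers, together with the relation $x^n\cdot x^{-n}=1$.

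First I would record that, since $x$ is invertible, $x^n$ is a well-defined element with $x^n\cdot x^{-n}=1$, and that by Theorem \ref{bla11} combined with Proposition \ref{bla0}(3) (applicable because $x^n\in i_d(\mathbf{n}\cdot x^{n-1}\cdot d(x))$, so this integral is nonempty) the derivation satisfies $d(x^{n})=\mathbf{n}\cdot x^{n-1}\cdot d(x)$. Next, applying $d$ to the identity $x^{n}\cdot x^{-n}=1$ and using $d(1)=0$ from Theorem \ref{bla1}(1), I get
$$
0=d(x^{n}\cdot x^{-n})=d(x^{n})\cdot x^{-n}+x^{n}\cdot d(x^{-n}),
$$
hence $x^{n}\cdot d(x^{-n})=-d(x^{n})\cdot x^{-n}=-\mathbf{n}\cdot x^{n-1}\cdot d(x)\cdot x^{-n}$, and by commutativity of $R$ the right-hand side collapses to $-\mathbf{n}\cdot x^{-1}\cdot d(x)$.

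Finally I would multiply this equation on the left by $x^{-n}$, using $x^{-n}\cdot x^{n}=1$, to obtain
$$
d(x^{-n})=x^{-n}\cdot\bigl(-\mathbf{n}\cdot x^{-1}\cdot d(x)\bigr)=-\mathbf{n}\cdot x^{-n-1}\cdot d(x),
$$
again invoking commutativity to merge the powers of $x$. Therefore $x^{-n}\in i_d(-\mathbf{n}\cdot x^{-n-1}\cdot d(x))$, as claimed. As an alternative, one can instead imitate the induction in the proof of Theorem \ref{bla11}: the base case $n=1$ reads $d(x^{-1})=-x^{-2}\cdot d(x)$, which follows from Corollary \ref{bla13c} applied to $x\in i_d(d(x))$, and the inductive step expands $d(x^{-n-1})=d(x^{-n}\cdot x^{-1})$ by the Leibniz rule and uses the inductive hypothesis together with the $n=1$ case.

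I do not expect a genuine obstacle here; the only points requiring care are the bookkeeping ones — converting the set-membership statement of Theorem \ref{bla11} into the honest equation $d(x^{n})=\mathbf{n}\cdot x^{n-1}\cdot d(x)$ via Proposition \ref{bla0}(3), and keeping track of commutativity when sliding the various powers of $x$ past $d(x)$.
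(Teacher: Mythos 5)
Your main argument is correct, and it takes a genuinely different route from the paper. The paper proves the statement by a fresh induction on $n$: the base case $n=1$ differentiates $x\cdot x^{-1}=1$ to get $d(x^{-1})=-x^{-2}\cdot d(x)$, and the inductive step expands $d(x^{-n-1})=d(x^{-n}\cdot x^{-1})$ by the Leibniz rule --- exactly the ``alternative'' you sketch at the end (and your appeal to Corollary \ref{bla13c} for the base case is a legitimate shortcut there). Your primary route instead leverages Theorem \ref{bla11} once and for all: reading $x^n\in i_d(\mathbf{n}\cdot x^{n-1}\cdot d(x))$ as the equation $d(x^n)=\mathbf{n}\cdot x^{n-1}\cdot d(x)$ (which is immediate from Definition \ref{def1}; the detour through Proposition \ref{bla0}(3) is harmless but unnecessary), differentiating $x^n\cdot x^{-n}=1$, and solving for $d(x^{-n})$ using commutativity. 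This buys a shorter, induction-free proof that makes the positive- and negative-power cases visibly two sides of the same identity; the paper's version buys self-containedness and structural parallelism with the proof of Theorem \ref{bla11}. Your bookkeeping (where commutativity is used to slide powers of $x$ past $d(x)$, and why $x^{-n}$ makes sense) is all in order.
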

\begin{proof}
We prove the theorem by induction on $n$.
\begin{itemize}
\item $n=1$. It follows from $0=d(1)=d(x\cdot x^{-1})=d(x)\cdot x^{-1}+x\cdot d(x^{-1})$ that $d(x^{-1})=-x^{-2}\cdot d(x)$. Therefore $x^{-1}\in i_d(-x^{-2}\cdot d(x))$.

\item Suppose that $x^{-n}\in i_d(-\mathbf{n}\cdot x^{-n-1}\cdot d(x))$. We prove that $x^{-n-1}\in i_d((-\mathbf{n}-1)\cdot x^{-n-2}\cdot d(x))$. Observe that $d(x^{-n-1})=d(x^{-n}\cdot x^{-1})=d(x^{-n})\cdot x^{-1}+x^{-n}\cdot d(x^{-1})=(-\mathbf{n}\cdot x^{-n-1}\cdot d(x))\cdot x^{-1}+x^{-n}\cdot (-x^{-2}\cdot d(x))=-\mathbf{n}\cdot x^{-n-2}\cdot d(x)-x^{-n-2}\cdot d(x)=(-\mathbf{n}-1)\cdot x^{-n-2}\cdot d(x)$. Therefore $x^{-n-1}\in i_d((-\mathbf{n}-1)\cdot x^{-n-2}\cdot d(x))$.
\end{itemize}
\end{proof}
\begin{corollary}\label{bla12c}
Let $R$ be a commutative ring with unity $1\in R$ and $d:R\to R$ a derivation on $R$. Then 
$$
x^{n}\in i_d(\mathbf{n}\cdot x^{n-1}\cdot d(x))
$$
for all invertible $x\in R$ and all integers $n$. 
\end{corollary}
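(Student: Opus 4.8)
The plan is to dispose of the statement by a case analysis on the sign of $n$, since the substantive work has already been done in Theorems \ref{bla11} and \ref{bla12}; only the degenerate case $n=0$ needs a separate (and essentially trivial) argument, and the negative case is a purely notational translation of Theorem \ref{bla12}.

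For $n>0$ there is nothing to prove: the assertion $x^{n}\in i_d(\mathbf{n}\cdot x^{n-1}\cdot d(x))$ is precisely Theorem \ref{bla11} (which in fact does not even use invertibility of $x$). For $n=0$ we have $x^{0}=1$ and $\mathbf{0}=0\cdot 1=0$, so the claim collapses to $1\in i_d(0)=\textup{Ker}(d)$; and $d(1)=0$ was already derived (from $d(1)=d(1\cdot 1)=2d(1)$) in the proof of Theorem \ref{bla1}, so this holds. For $n<0$, write $n=-m$ with $m$ a positive integer. Theorem \ref{bla12} yields $x^{-m}\in i_d(-\mathbf{m}\cdot x^{-m-1}\cdot d(x))$, and since $-\mathbf{m}=(-m)\cdot 1=\mathbf{n}$ and $-m-1=n-1$, this is exactly $x^{n}\in i_d(\mathbf{n}\cdot x^{n-1}\cdot d(x))$. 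Since $x$ is assumed invertible, $x^{n}$ is defined for negative $n$, so all three cases together cover every integer $n$.

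The only point that deserves a moment of attention---rather than being a genuine obstacle---is keeping the bold-face integer notation consistent: one should note that $\mathbf{n}=n\cdot 1$ is meaningful for every integer $n$ and satisfies $-\mathbf{m}=\mathbf{-m}$, so that the minus sign appearing in the formula $-\mathbf{m}\cdot x^{-m-1}\cdot d(x)$ of Theorem \ref{bla12} is absorbed correctly into $\mathbf{n}\cdot x^{n-1}\cdot d(x)$ under the substitution $n=-m$, with no residual sign discrepancy. Commutativity of $R$ is used only through Theorems \ref{bla11} and \ref{bla12} themselves (to manipulate the powers and the products $x^{k}\cdot d(x)$); no additional appeal to it is required here.
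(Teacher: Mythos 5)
Your proof is correct and follows essentially the same route as the paper: Theorem \ref{bla11} for $n>0$, Theorem \ref{bla12} (with the substitution $n=-m$) for $n<0$, and $1\in i_d(0)$ from Theorem \ref{bla1} for $n=0$. The extra care you take with the sign bookkeeping $-\mathbf{m}=\mathbf{n}$ is a welcome clarification but not a departure from the paper's argument.
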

\begin{proof}
If $n\neq 0$, the result follows from Theorem \ref{bla11} and Theorem \ref{bla12}. If $n=0$, then $x^0=1\in i_d(0)$ by Theorem \ref{bla1}.
\end{proof}
\begin{corollary}\label{bla12cc}
Let $R$ be a commutative ring with unity $1\in R$ and $d:R\to R$ a derivation on $R$. If $\mathbf{n}$ is invertible, then
$$
\mathbf{n}^{-1}\cdot x^{n}\in i_d(x^{n-1}\cdot d(x))
$$
for all invertible $x\in R$ and all integers $n$. 
\end{corollary}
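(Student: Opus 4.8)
The plan is to derive this corollary directly from Corollary~\ref{bla12c} together with Lemma~\ref{prenesi1}, in exactly the same way that Corollary~\ref{bla11c} was obtained from Theorem~\ref{bla11}. First I would fix an invertible element $x\in R$ and an integer $n$ such that $\mathbf{n}$ is invertible, and put $y=x^{n-1}\cdot d(x)$. By Corollary~\ref{bla12c} we already know that $x^{n}\in i_d(\mathbf{n}\cdot y)$: that corollary applies to every integer $n$ whenever $x$ is invertible, and it imposes no invertibility condition on $\mathbf{n}$ itself.

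Next I would invoke part (1) of Lemma~\ref{prenesi1}: since $x^{n}\in i_d(\mathbf{n}\cdot y)$ and $\mathbf{n}$ is invertible, we get $\mathbf{n}^{-1}\cdot x^{n}\in i_d(y)=i_d(x^{n-1}\cdot d(x))$, which is precisely the assertion. Commutativity of $R$ is assumed in the hypothesis and is already exploited inside Corollary~\ref{bla12c}; no additional appeal to it is needed at this step.

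I do not expect any real obstacle here; the proof is a two-line composition of results already established. The only subtlety worth noting is the value $n=0$: then $\mathbf{n}=\mathbf{0}=0$, which is invertible only in the trivial ring, so the hypothesis ``$\mathbf{n}$ is invertible'' automatically excludes this case in any nontrivial ring, and in the trivial ring the statement holds vacuously. Similarly, in rings of positive characteristic one may have $\mathbf{n}=0$ for some $n\neq 0$, but such $\mathbf{n}$ is again non-invertible, so the hypothesis rules it out and the application of Lemma~\ref{prenesi1} is always legitimate.
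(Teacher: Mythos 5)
Your proof is correct and follows exactly the route the paper takes: apply Corollary~\ref{bla12c} to get $x^{n}\in i_d(\mathbf{n}\cdot x^{n-1}\cdot d(x))$ and then Lemma~\ref{prenesi1}(1) to divide by $\mathbf{n}$. Your extra remark about $n=0$ and non-invertible $\mathbf{n}$ being excluded by hypothesis is a sensible sanity check but not needed.
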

\begin{proof}
The result follows directly from Corollary \ref{bla12c} and Lemma \ref{prenesi1}.
\end{proof}


Let $\mathcal R_d=\{i_d(x) \ | \ i_d(x)\neq \emptyset, x\in R\}$. Since $\mathcal R_d=R/_{\textup{Ker} d}$, it follows from the first isomorphism theorem that $\mathcal R_d$ is, as an additive group, isomorphic to $d(R)$. Therefore the ring structure on  $\mathcal R_d$ can be easily obtained from $d(R)$. However, $\mathcal R_d$ can be interpreted as a ring if $d(R)$ is a subring of $R$. As seen in Example \ref{not}, $d(R)$ is not necessarily a subring of $R$.

\begin{definition}\label{proper}
Let $d$ be a derivation on a ring $R$. The derivation $d$ is a proper derivation, if $d(R)$ is a subring of $R$.
\end{definition}

We conclude the section with several interesting examples of proper  as well as non-proper derivations.
\begin{example}
Let $R$ be a ring and $d:R\to R$ the trivial derivation on $R$. Obviously $d$ is a proper derivation. 
\end{example}
\begin{example}\label{not}
Let $R$, $d$, and $A,X,Y\in R$ be such as in Example \ref{matrike}. 
Then $X\cdot Y=A$, $d(-X)=X$ and $d(Y)=Y$,
and therefore (by a similar argument as the one in Example \ref{matrike}) one can easily see that $i_d(X\cdot Y)=\emptyset$ although $i_d(X),i_d(Y)\neq \emptyset$. Hence, $d$ is not a proper derivation.
\end{example} 
\begin{example}\label{polinom}
Let $R$ be the polynomial ring $\mathbb R[X]$ and $d:R\to R$ the standard derivation $d(p)=p'$. Then obviously, for any  $p_1,p_2\in R$, $p_1\cdot p_2\in R$ and there always exists a polynomial $p\in R$ such that $d(p)=p_1\cdot p_2$. Therefore $d$ is a proper derivation. 
\end{example}
For derivations $d$ in Examples \ref{polinom}, $d(R)$ a subring of $R$ but it is not a proper subring of $R$. The following example presents a ring $R$ and a non-trivial inner derivation $d$ on $R$ such that $d(R)$ is a proper subring of $R$.
\begin{example}\label{tri}
Let 
$$
R=\left\{\begin{bmatrix}
a&b&c\\
0&0&d\\
0&0&e
\end{bmatrix} \ | \ a,b,c,d,e\in \mathbb R\right\}
$$
be the subring of real $3\times 3$-matrices $M_3(\mathbb R)$, 
and let 
$A=
\begin{bmatrix}
1&1&1\\
0&0&1\\
0&0&1
\end{bmatrix}$.
Then let $d:R\to R$ be the inner derivation on $R$, defined by $d(X)=X\cdot A-A\cdot X$. It is easily seen that 
$$
d(R)=\left\{\begin{bmatrix}
0&a&b\\
0&0&c\\
0&0&0
\end{bmatrix} \ | \ a,b,c\in \mathbb R\right\}
$$
is a proper subring of $R$. Therefore $d$ is a proper derivation on $R$.
\end{example}

\section{Jordan integrations on rings}\label{4}

In this section we introduce the new concept of Jordan integrations on rings and show basic properties of such Jordan integrations.
\begin{definition}\label{defdef1}
Let $R$ be a ring, $x\in R$ and $\delta:R\to R$ a Jordan derivation on $R$. Let 
$$
j_{\delta}(x)=\{y\in R \ | \ x=\delta(y)\}.
$$
We say that $y\in R$ is a Jordan $\delta$-primitive element of the element $x\in R$, if  
$$
y\in j_{\delta}(x).
$$
We call the function $j_{\delta}:R\to 2^R$, $x\mapsto j_{\delta}(x)$, the Jordan ${\delta}$-integration on $R$ and the set $j_{\delta}(x)$ the Jordan ${\delta}$-integral of $x$. 

The function $j:R\to 2^R$ is a Jordan integration on $R$, if there is a Jordan derivation ${\delta}$ on $R$, such that $j=j_{\delta}$. For each $x\in R$ we call $j(x)$ a Jordan integral of $x$. 
\end{definition}
Since every derivation $d$ is a Jordan derivation, it follows that any $d$-integration is a Jordan $d$-integration. Since there are rings $R$ and Jordan derivations ${\delta}$ on $R$ that are not derivations, it follows that for such ${\delta}$'s, the corresponding Jordan ${\delta}$-integrations $j_{\delta}$ are never ${\delta}$-integrations. But, is it true that for any such $\delta$ there is a derivation $d$, such that $j_{\delta}=i_d$? The following theorem answers the question in negative.
\begin{theorem}
There is a ring $R$ and a Jordan derivation $\delta$ on $R$ such that for each derivation $d$ on $R$, $i_d\neq j_{\delta}$. 
\end{theorem}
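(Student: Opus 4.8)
The plan is to reduce the statement to the classical fact that Jordan derivations need not be derivations, and then to exhibit a small ring realising this.

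\emph{Reduction.} The key observation is that the set-valued map $i_d$ determines the single-valued map $d$ completely: for every $y\in R$ we have $y\in i_d(x)\iff d(y)=x$, so $y$ lies in $i_d(x)$ for exactly one $x$, namely $x=d(y)$ (existence of such $x$ is Proposition~\ref{bla0}(2), uniqueness is single-valuedness of $d$). The same applies to $j_\delta$ and $\delta$. Hence, if $j_\delta=i_d$ as maps $R\to 2^R$, then for each $y\in R$, setting $x=\delta(y)$ gives $y\in j_\delta(x)=i_d(x)$, so $d(y)=x=\delta(y)$; that is, $d=\delta$. Consequently, a Jordan derivation $\delta$ that is \emph{not} a derivation satisfies $j_\delta\neq i_d$ for every derivation $d$ on $R$. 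So it suffices to produce a ring with a Jordan derivation that is not a derivation.

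\emph{The example.} I would take $R$ to be the $3$-dimensional non-unital real algebra with basis $a,b,c$ and multiplication determined on basis elements by $ab=c$, $ba=-c$, and all other products of two basis elements equal to $0$ (equivalently, the ideal of elements of positive degree in the exterior algebra on two generators). A direct check shows every product of three basis elements vanishes, so $R$ is associative; and a one-line computation gives $xy+yx=0$ for all $x,y\in R$, i.e.\ the Jordan product $x\circ y$ is identically $0$ on $R$. Therefore \emph{every} additive map $R\to R$ — in particular the identity $\mathrm{id}_R$ — satisfies $\delta(x\circ y)=0=\delta(x)\circ y+x\circ\delta(y)$ and is a Jordan derivation on $R$. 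But $\mathrm{id}_R$ is not a derivation: $\mathrm{id}_R(ab)=c$, whereas $\mathrm{id}_R(a)\cdot b+a\cdot\mathrm{id}_R(b)=ab+ab=2c\neq c$ since $c\neq 0$. Taking $\delta=\mathrm{id}_R$ and invoking the reduction above finishes the proof.

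The only real content is the reduction: recognizing that $j_\delta$ secretly encodes the single-valued map $\delta$, so that $j_\delta=i_d$ is the rigid condition $\delta=d$. After that one just needs the well-known existence of a Jordan derivation which is not a derivation, and the cleanest witness is a ring whose Jordan product degenerates to zero while its associative product does not, for which even the identity map works. The only point requiring care is to work over a ground ring in which $2c\neq c$, which is why I take the base field to be $\mathbb{R}$ (any field of characteristic $\neq 2$ would do).
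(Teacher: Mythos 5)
Your reduction is exactly the paper's argument: from $j_\delta=i_d$ one reads off, for each $y$, that $d(y)=\delta(y)$ (since $y$ belongs to precisely one integral, indexed by its image under the map), hence $d=\delta$, contradicting the choice of $\delta$ as a Jordan derivation that is not a derivation. So in that respect the proposal is correct and essentially identical to the paper's proof. Where you go beyond the paper is in actually exhibiting a witness: the paper opens with ``Let $R$ be a ring and $\delta$ a Jordan derivation on $R$ that is not a derivation'' and leans on the well-known fact (cited in the introduction) that such pairs exist, whereas you construct one explicitly --- the positive-degree part of the exterior algebra on two generators over $\mathbb{R}$, where all triple products vanish (so associativity is immediate), the Jordan product is identically zero (so every additive map, in particular $\mathrm{id}_R$, is a Jordan derivation), and $\mathrm{id}_R$ fails the Leibniz rule because $ab=c\neq 2c$. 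That example checks out and makes the existence statement self-contained, which is a genuine, if modest, improvement over the paper's version.
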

\begin{proof}
Let $R$ be a ring and $\delta$ a Jordan derivation on $R$ that is not a derivation. Assume that there is a derivation $d$ on $R$ such that $i_d(x)=j_{\delta}(x)$ for all $x\in R$. Let $y\in R$ be any element. Then there is $x\in R$ such that $y\in i_d(x)=j_{\delta}(x)$. It follows that $d(y)=x=\delta(y)$. Therefore $d=\delta$, which is a contradiction. 
\end{proof}

Following the same line of thoughts as in Section \ref{2}, one can obtain similar results for Jordan integrations -- they are listed in the theorem below. 
\begin{theorem}\label{bla}
Let $R$ be a ring and $\delta:R\to R$ a Jordan derivation on $R$. Then the following holds true.
\begin{enumerate}
\item $0\in j_{\delta}(0)$.
\item If $y,z\in j_{\delta}(x)$, then $y-z\in j_{\delta}(0)$ for all $x,y,z\in R$.
\item For all $x\in R$, $x\in j_{\delta}({\delta}(x))$.
\item If $x\in R$ and  $y\in j_{\delta}(x)$, then 
$$
j_{\delta}(x)=\{y+z \ | \ z\in \textup{Ker}(\delta)\}.
$$
\item If $j_{\delta}(x)\neq \emptyset$, then $\delta(j_{\delta}(x))=\{x\}$ for all $x\in R$.
\item If $j_{\delta}(x),j_{\delta}(y)\neq \emptyset$, then $j_{\delta}(x+y)=j_{\delta}(x)+j_{\delta}(y)$ for all $x,y\in R$.
\item (Jordan integration by parts) 

\noindent If 
$
j_{\delta}(\delta(x)\cdot y),j_{\delta}(x\cdot \delta(y)),j_{\delta}(\delta(y)\cdot x),j_{\delta}(y\cdot \delta(x))\neq \emptyset
$, then  
$$
x\circ y\in j_{\delta}(\delta(x)\cdot y)+j_{\delta}(x\cdot \delta(y))+j_{\delta}(\delta(y)\cdot x)+j_{\delta}(y\cdot \delta(x))
$$
for all $x,y\in R$.
\item If $y_1\in j_{\delta}(x_1)$ and $y_2\in j_{\delta}(x_2)$, then $y_1+y_2\in j_{\delta}(x_1+x_2)$ for all $x_1,x_2,y_1,y_2\in R$.
\item If $y_1\in j_{\delta}(x_1)$ and $y_2\in j_{\delta}(x_2)$, then $y_1\circ y_2\in j_{\delta}(x_1\cdot y_2+y_1\cdot x_2+x_2\cdot y_1+y_2\cdot x_1)$ for all $x_1,x_2,y_1,y_2\in R$.
\item The Jordan derivation $\delta$ is surjective if and only if $j_{\delta}(x)\neq \emptyset$ for all $x\in R$.
\item The Jordan derivation $\delta$ is injective if and only if $|j_{\delta}(x)|=1$ for all $x\in R$. 
\end{enumerate}
\end{theorem}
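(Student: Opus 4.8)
The plan is to obtain all eleven assertions by transporting, essentially word for word, the corresponding results of Section \ref{2}, using the fact that a Jordan derivation $\delta$ is in particular an additive endomorphism of the additive group of $R$. That observation alone is responsible for items (1)--(6), (8), (10) and (11); only items (7) and (9), which mention the Jordan product $\circ$, genuinely use the defining identity $\delta(x\circ y)=\delta(x)\circ y+x\circ\delta(y)$ from Definition \ref{moja1}. Accordingly I would organise the proof into two blocks: first the ``additive'' items, then the two items that see the Jordan structure.

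For the additive block: item (1) is just $\delta(0)=0$. For (2), from $\delta(y)=\delta(z)=x$ and additivity we get $\delta(y-z)=0$, so $y-z\in\textup{Ker}(\delta)=j_{\delta}(0)$. Item (3) is immediate from Definition \ref{defdef1}, since $\delta(x)=\delta(x)$. Item (4) is proved exactly as Theorem \ref{jedro} together with Corollary \ref{jedro1}: the inclusion $j_{\delta}(x)\subseteq\{y+z\mid z\in\textup{Ker}(\delta)\}$ uses item (2), and the reverse inclusion uses additivity of $\delta$. Item (5) is immediate, since every element of a nonempty $j_{\delta}(x)$ is sent to $x$ by $\delta$. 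Item (6) repeats the proof of Lemma \ref{bla3} verbatim with $d$ replaced by $\delta$; in particular that argument also shows $j_{\delta}(x+y)\neq\emptyset$ whenever $j_{\delta}(x),j_{\delta}(y)\neq\emptyset$, a fact I will reuse below. Item (8) follows from $\delta(y_1+y_2)=x_1+x_2$, and items (10) and (11) are the proof of Proposition \ref{bla6} with $d$ replaced by $\delta$.

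For the Jordan block, item (9) is obtained by expanding $\delta(y_1\circ y_2)=\delta(y_1)\circ y_2+y_1\circ\delta(y_2)=x_1\circ y_2+y_1\circ x_2$ and then expanding each Jordan product via $a\circ b=a\cdot b+b\cdot a$ to reach $x_1\cdot y_2+y_1\cdot x_2+x_2\cdot y_1+y_2\cdot x_1$; item (3) then gives $y_1\circ y_2\in j_{\delta}$ of that element, which is exactly the claimed set. Item (7), the Jordan integration by parts, is the delicate one and the step I expect to be the main obstacle. By item (3), $x\circ y\in j_{\delta}(\delta(x\circ y))$, and $\delta(x\circ y)=\delta(x)\circ y+x\circ\delta(y)=\delta(x)\cdot y+y\cdot\delta(x)+x\cdot\delta(y)+\delta(y)\cdot x$; the care needed here is to expand into precisely the four one-sided products appearing in the statement, keeping left and right factors straight since $R$ is noncommutative. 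It then remains to split $j_{\delta}$ of this four-term sum into the sum of the four sets $j_{\delta}(\delta(x)\cdot y)$, $j_{\delta}(x\cdot\delta(y))$, $j_{\delta}(\delta(y)\cdot x)$, $j_{\delta}(y\cdot\delta(x))$. This is not a single application of item (6) but an induction on the number of summands: after adding the first two terms one invokes the nonemptiness hypotheses together with the remark (from the proof of item (6)) that such a sum $j_{\delta}(\cdot)+j_{\delta}(\cdot)$ is again nonempty, so that item (6) applies to the next split, and so on; commutativity of $+$ in $R$ lets one match the resulting sum with the order of the terms written in the theorem.
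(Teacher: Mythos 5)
Your proposal is correct and takes exactly the route the paper intends: the paper's own ``proof'' of Theorem \ref{bla} consists solely of the remark that one should follow the arguments of Section \ref{2}, and your write-up is a faithful, detailed execution of that plan, with the right separation into purely additive items and the two items ((7) and (9)) that use the Jordan identity. In particular, your handling of the four-fold splitting in item (7) --- iterating item (6) and reusing the nonemptiness observation from its proof --- is precisely the detail the paper leaves to the reader, and it is carried out correctly.
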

\begin{proof}
Following the similar ideas as those in the proofs of corresponding  theorems in Section \ref{2}, one can easily prove the theorem. We leave the details to a reader.
\end{proof}

\bibliographystyle{amsplain}

\noindent Iztok Bani\v c \\ Faculty of Natural Sciences and Mathematics \\University of Maribor \\ Koro\v ska 160, SI-2000 Maribor \\ Slovenia \\ and \\ Andrej Maru\v si\v c Insitute \\ University of Primorska\\ Muzejski trg 2, SI-6000 Koper\\ Slovenia
\\ E-mail address: iztok.banic@um.si

\end{document}